\newtheorem{theorem}{Theorem}
\newtheorem{argument}[theorem]{Argument}
\newtheorem{corollary}[theorem]{Corollary}
\newtheorem{definition}[theorem]{Definition}
\newtheorem{lemma}[theorem]{Lemma}
\newtheorem{remark}[theorem]{Remark}
\newenvironment{proof}[1][Proof]{\textbf{#1.} }{\ \rule{0.5em}{0.5em}}
\begin{document}
\title{SOME ELEMENTS OF A POSSIBLE DEMONSTRATION OF THE COLLATZ CONJECTURE}
\author{Denis Mart\'{i}nez T\'{a}panes, Jose E. Mart\'{i}nez Serra}
\date{\small Mathematical Department, Universidad Central de Las Villas (Marta Abreu University), Cuba, Santa Clara City (zip code 50100),  November 11, 2015}
\maketitle
\begin{abstract}In this paper we show the following facts: The probability of increasing $ A_{k}=P(T^{k} (x_{0})>T^{k-1} (x_{0})) $, and the probability of decreasing $B_{k}=P(T^{k} (x_{0})<T^{k-1} (x_{0}))$ in step $ k $ of a Collatz  procedure initiated in $ x_ {0} \in \mathbb{N} $ arbitrary, they are equal for all values of $ k $. This influences on the law that generates the numbers of a Collatz sequence so that it is forced to decrease until the unit. It is also shown that in the Collatz conjecture is false for every problem $an +b$ such that $a\geq 5\geq b+2$, and its probabilistic character can not be ignored if you want to get to the definitive solution, among other interesting arguments.
\end{abstract}
\begin{quotation}
\textbf{keywords}: Conjecture, Collatz, Procedures, Equivalence, Probability.
\end{quotation}
\section{Introduction}
In $1937$, the German mathematician Lothar Collatz states a hypothesis that has remained intact until today. His affirmation today takes different names: Collatz conjecture, $3n + 1$ problem, Syracuse problem, etc.\\Collatz conjecture predicts that if any natural number is subjected to certain operations, given by a very specific procedure, a sequence of numbers that tends to a unique cycle always is produced. These operations are in general (Starting at an arbitrary natural number):
\begin{enumerate}
\item If the initial number is even, then is divided by $2$ successively until an odd number and turn to the step $2$.
\item If the number is odd multiply by $3$ and the result is added $1$, therefore you arrives to an even number and turn to step $1$.
\end{enumerate}
As an example may be the case of the number $5$:
\[\left(5\cdot3+1=16\right)\longrightarrow\left(\frac{16}{2}=8\right)\longrightarrow\left(\frac{ 8}{2}=4\right)\longrightarrow\left(\frac{4}{2} =2\right)\longrightarrow\]
\[\longrightarrow\left(\frac{2}{2}=1\right)\longrightarrow\left(1\cdot3+1=4\right)\longrightarrow\left(\frac{4}{2}=2\right)\longrightarrow\left(\frac{2}{2}=1\right)\]
Case of number  $12$:
\[\left(\frac{12}{2}=6\right)\longrightarrow\left(\frac{6}{2}=3\right)\longrightarrow\left(3 \cdot 3+1=10\right)\longrightarrow\left(\frac{10}{2}=5\right)\ldots\]
The process then continues  according to case of the number $5$, already seen.\\Several investigations are carried out since the second half of the last century to the present trying to obtain a general expression of the existence of the cycle in $3x + 1$ problem and others try to verify the conjecture of the maximum quantity of numbers that meet the conjecture, using computational methods. Many of these studies are summarized in Lagarias \cite{1}$ (2011) $ and Lagarias \cite{2}$(2012)$. However, no case it has the solution, which seems to be still far off, judging by the results, although interesting conjectures and relationships are established with many other branches of mathematics.\\Our approach does not it consider superior to any other, simply we try, in the most direct way possible, resolve the problem by addressing the same process that occurs as generation of numbers starting in an initial number. We think the only way to solve the problem is to discover the laws governing the succession generation, which must be achieved by analyzing the general mathematical expression of this process given in Lemma \ref{lemma8}. Said expression contains variables that depend on how much the number generation process runs before arriving in a cycle. But we found, too, that the variables have an unquestionable dependence of certain probabilities associated with the sign of the difference $x_{i}-x_{i+1}$ for two successive numbers ($x_{i}$ y $x_{i+1}$) that they are generated by what we have called, ``Collatz procedure''.\\It should be clear that the probability that in this work is handled is not determined on a empirical way, by the frequency with which certain events occur, on the contrary it is a classic probability. In Theorem \ref{theorem10} it is stated that exist exactly the same amount, within certain set $\Gamma_ {M}$, of natural numbers of an certain class whose elements have behavior $ x $ and another, whose elements have a behavior $ y $ different. So if it is removed of $\Gamma_ {M} $ one random element, the probability will be $\frac{1}{2}$ for $ x $ , and $\frac{1}{2}$ for $ y $ respectively. This excludes the possibility of ``strange behaviors '' outside these two exclusionary behaviors.\\The theorem \ref{theorem9}, meanwhile, shows that with these probabilities it is ensured that the numbers generated during the procedure Collatz, initiated in any natural number, will be ``forced'' to decrease to a smaller value than any predetermined natural number. This paper says nothing about how this decrease is done and it is clear that for each particular number, the succession should also be very particular too; what we do clearly is to show that, at some point, must be decreasing up to the unit.\\ The Lagarias work, already mentioned, lists the latest results and is an important guide to deal with the problem. However none of the registered work makes a demonstration itself but interesting studies are presented. Unfortunately we did not find in this work an approach that go in the direction that we raised here, and most of them presented the results in such a general way, that is lost a lot of information, as is the case of Bohm and Sontacchi \cite{3}$(1978) $ where it is stated that the veracity of the Collatz conjecture is equivalent to say that every positive integer $ n $ can be written as,
\[n=\frac{1}{3^{m}}\left(2^{v_{m}}-\sum^{m-1}_{k=0}3^{m-k-1}2^{v_{k}}\right),\]
where $0\leq v_0 < v_1 < \ldots v_m$ are integers; but from our point of view the question is, precisely, in these integers, and in the way they have as sums of the exponents of $2$ listed in the corresponding equations to Collatz procedure (\ref{eqn10}) which will be seen below. In fact, in essence, expression that is shown in lemma \ref{lemma8} is the same raised by these authors, assuming that the sequence of numbers obtained, has come to the unit and whereas in addition $ n = x_0 $. However we prefer to perform a ourselves demonstration and show the shape of the $\ldots v_m\ldots$. Also showing that $ v_m=m\bar{k}$, which is fundamental here for statistical considerations.\\Although the authors of \cite{6} are approaching to the objective using a probabilistic approach, they do not combine the result related to the probability with the law (The mathematical expression given in the lemma \ref{lemma8}) of Collatz procedure, but we, on the contrary, do this in the theorem \ref{theorem9}; furthermore, they consider the probability (value) as hypotheses, while the classical probability that is handled in our work, is fully demonstrated in Theorem \ref{theorem10}. So in our case we are dealing with a full demonstration, but not of a type approach `` If $ A $ it is demonstrated  then $ B $ is true'' but rather ``$ B $ is true''.\\Meanwhile, in the section on findings, we show the necessary and indispensable statistical character of the $3n + 1$ problem, so any other approach undoubtedly would establish relations and interesting ideas, but not would solve the Collatz conjecture.\\ In addition, we point the very important fact, which is the application of the demonstration presented in this paper to the general problem , obtaining the result expected; i.e., \emph{The Collatz conjecture is false for the general problem  $an + b $}.\\
Now, we relate some arguments that have been raised, against this article so as to serve as a starting point for future revisions and to reaffirm the arguments of our work, respecting the opinions of others, but clarifying ideas could raise questions to other colleagues:
\begin {argument}
``\ldots the statement of the first main result (that is, Theorem \ref{theorem9}) is relatively clear.  Unfortunately, it's not possible to make sense of this statement mathematically.  In particular, the statement claims that two probabilities are equal.  But in these two probabilities, everything is fixed, so the probabilities involved are statements like $Pr(5 > 3) = Pr(5 < 3)$, which is clearly false."
\label{arg1}
\end {argument}
The proposition $ Pr (5> 3) = Pr (5 <3) $ can not be equivalent to no proposition, since the propositions are true or false, whereas this no sense, we can say false things, if that we are wrong, but not things completely meaningless; it not even makes sense the proposition $ Pr (5> 3) $ because $ 5> 3$ is a truth that is not to be considered from the point of view of the theory of probability.\\
In fact, we treat the probability that a particular event well lead to a well-defined outcome; ie the random extraction of a natural number of a finite set of natural numbers, implies that it chose a process Collatz also random (All natural has its own process Collatz), then we show that in step $ k $ of this process, randomly chosen, the probability that the number obtained is greater than the number obtained in step $ k-1 $ equals $ 1/2 $. Therefore, in step $ k $ of this process, randomly chosen, the probability that the number obtained is less than the number obtained in step $ k-1 $ equals also equal to $ 1/2 $.\\
Meanwhile the action of extracting, at random, of a natural number of a finite set, is not unusual and can to be always, in principle,  by simple enumeration of objects (for example balls) and placing them in an urn.

\begin {argument}
``The statement also does not appear to be salvageable, because the quantities that could vary must vary over the natural numbers, over which the authors seem to want a uniform probability distribution.  But no such distribution exists."
\label{arg2}
\end {argument}
Of course the natural numbers do not have uniform distribution, but we do not deal here with the distribution of natural numbers, but with the distribution of the results obtained in the experiment that involves extraction a natural number from a finite set of natural numbers, which is perfectly feasible in principle as discussed above. But, as already explained, the random extraction of a natural number is equivalent to the random extraction of the process of Collatz. Particularly if we assign to step from a smaller number to a larger (an increase) the number $1$, and step from a higher number to a lower (decreased) zero, we have obtained a random sequence of zeros and ones whose distribution will be normal to the extent that the amount of these elements is greater in accordance with the law of large numbers.

\section{Definitions}
\begin{definition}General Collatz Procedure (\textbf{g.c.p.}). It is so called a sequence of numbers initiated in a number $ x_ {0} \in \mathbb {N} $, which follow the following scheme:
\[\left\{T^{0}(x_{0}),T^{1}(x_{0}),T^{2}(x_{0})\ldots,T^{n}(x_{0}),\ldots\right\},\]
where	
\[T^{0} (x_{0} )  =x_{0}, \]
and
\[T^{i} (x_{0})=\left\{\begin{array}{c}
\frac{T^{i-1} (x_{0})}{2}, \quad  T^{i-1} (x_{0})\in2\mathbb{N}\\\\
\frac{3T^{i-1}(x_{0})+1}{2}, \quad T^{i-1} (x_{0})\in2\mathbb{N}+1	 
\end{array}\right\}.\]
\label{definition3}
\end{definition}
\begin{definition}Odd Collatz Procedure (\textbf{o.c.p.}). It is so called a sequence of numbers initiated in a number $ x_ {0} \in 2\mathbb {N}+1$, which follow the following scheme:
\[\left\{T^{0}(x_{0}),T^{1}(x_{0}),T^{2}(x_{0})\ldots,T^{n}(x_{0}),\ldots\right\},\]
where
\[T^{0} (x_{0} )= x_{0}\in2\mathbb{N}+1 \]
\[T^{i}(x_{0} )= \frac{3 T^{i-1}(x _{0})+1}{2^{k_{i} }},\quad k_{i}\in\mathbb{N},\quad T^{i} (x_{0})\in2\mathbb{N}+1.\]
It is easy to verify that \textbf{o.c.p.} can be represented by a successive set of equations as follows:
\begin{equation}
\begin{array}[pos]{c}
3 x_{0}+1  = 2^{k_{1}} T^{1}(x_{0})\\
3 T^{1}(x_{0})+1=2^{k_{2}} T^{2}(x_{0})\\
\vdots\\
3 T^{n-1}(x_{0})+1=2^{k_{n}} T^{n}(x_{0})\\
\vdots
\end{array}
\label{eqn10}
\end{equation}
\label{definition4}
\end{definition}
\begin{definition}Collatz Step (\textbf{c.s.}). It is so called to the step taken from $T^{i} (x_{0})$ until $T^{i+1} (x_{0})$ in the general procedure given in the definition \ref{definition3}.
\item \textbf{Definition \ref{definition5}.1.}Decrement Collatz Step(\textbf{d.c.s.}). Occurs if you have $T^{i+1} (x_{0})<T^{i} (x_{0})$.  
\item \textbf{Definition \ref{definition5}.2.}Increase Collatz Step(\textbf{i.c.s.}). Occurs if you have $T^{i+1} (x_{0})>T^{i} (x_{0})$.
\label{definition5}  
\end{definition}
\begin{definition}Collatz Number(\textbf{c.n.}). That's any natural number such that  \textbf{\emph{g.c.p.}} started in it, tend to the cycle $[2,1]$.
\label{definition6}
\end{definition}
\textbf{Statement} \emph{Collatz Conjecture(\textbf{c.c.}).Given the above definition this conjecture states that all natural numbers are \textbf{c.n.}.}
\section{Propositions}
\begin{lemma}Let $ 2^{k} m+i (0 \leq i< 2^k) $ be an equivalent class of additive group $ \mathbb{Z}_{2^k} $ module $ 2^k $, then the relationship $T^k (2^{k} m+i)=3^{p_{k}} m+T^{k} (i) $ is satisfied, being $p_{k} $ the amount of \textbf{i.c.s.} that occur until step $k$.
\label{lemma7}
\end{lemma}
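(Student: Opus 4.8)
The plan is to prove a slightly stronger, fully explicit version of the identity by induction on the step index. Precisely, I would show that for every $j$ with $0\le j\le k$,
\[ T^{j}(2^{k}m+i)=2^{k-j}\,3^{p_{j}}\,m+T^{j}(i), \]
where $p_{j}$ denotes the number of \textbf{i.c.s.} among the first $j$ steps of the \textbf{g.c.p.} started at $2^{k}m+i$. Taking $j=k$ collapses the factor $2^{k-j}$ to $1$ and yields exactly $T^{k}(2^{k}m+i)=3^{p_{k}}m+T^{k}(i)$. The base case $j=0$ is immediate, since $T^{0}(x_{0})=x_{0}$ and $p_{0}=0$ make both sides equal to $2^{k}m+i$.

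The heart of the induction is a parity observation. As long as $0\le j<k$, the exponent $k-j\ge 1$, so the term $2^{k-j}3^{p_{j}}m$ is even and hence $T^{j}(2^{k}m+i)=2^{k-j}3^{p_{j}}m+T^{j}(i)$ has the same parity as $T^{j}(i)$. By Definition \ref{definition3}, the \textbf{g.c.p.} started at $2^{k}m+i$ therefore takes the same branch at step $j+1$ as the \textbf{g.c.p.} started at $i$ (halving in the even case, $x\mapsto(3x+1)/2$ in the odd case), and since an \textbf{i.c.s.} is precisely an odd step, the counter updates identically for both: $p_{j+1}=p_{j}$ if $T^{j}(i)$ is even, $p_{j+1}=p_{j}+1$ if it is odd. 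In the even case, halving gives $T^{j+1}(2^{k}m+i)=2^{k-j-1}3^{p_{j}}m+\tfrac12 T^{j}(i)=2^{k-(j+1)}3^{p_{j+1}}m+T^{j+1}(i)$; in the odd case, $T^{j+1}(2^{k}m+i)=3\cdot 2^{k-j-1}3^{p_{j}}m+\tfrac12\bigl(3T^{j}(i)+1\bigr)=2^{k-(j+1)}3^{p_{j+1}}m+T^{j+1}(i)$. This re-establishes the formula at step $j+1$ and closes the induction.

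The step I expect to need the most care is exactly this parity bookkeeping: I must be sure the orbit of $2^{k}m+i$ cannot diverge from the orbit of $i$ during the first $k$ steps --- which is what the surviving factor $2^{k-j}$ with $j<k$ guarantees --- and that $p_{j}$ is unambiguous, i.e. the same integer whichever of the two orbits one counts along (this is a free by-product of the same observation). The degenerate case $m=0$ reduces both sides to $T^{k}(i)$ and is trivially consistent, so no separate argument is needed there; all remaining content is the routine affine algebra shown above.
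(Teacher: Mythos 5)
Your proposal is correct and follows essentially the same route as the paper: the paper also proves the strengthened intermediate identity $T^{r}(2^{k}m+i)=3^{p_{r}}2^{k-r}m+T^{r}(i)$ by induction on the step and then sets $r=k$. The only difference is cosmetic --- you start the induction at $j=0$ instead of $j=1$ and spell out the parity argument (that the surviving even factor $2^{k-j}m$ with $j<k$ forces both orbits to take the same branch, so $p_{j}$ updates identically), a point the paper uses implicitly without comment.
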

\begin{proof}
In fact, let $ p_{r} $ be amount of \emph{\textbf{i.c.s.}} in $r$ steps, first we show that the following relation is satisfied
\[T^{r} (2^{k} m+i)=3^{p_r} 2^{k-r}m+T^{r} (i),\]
if $ r=1 $, then
\[ T(2^{k} m+i)=\left\{
\begin{array}[pos]{c}
2^{k-1} m+\frac{i}{2}, i \in 2\mathbb{N}\\\\
2^{k-1} 3 m+\frac{3 i+1}{2}, i \in 2\mathbb{N}+1
\end{array}
\right\}=\{2^{k-1} 3^{p_1} m+T(i)\}.\]
Obviously $ p_1$ is to be $0$ or $1$ respectively if $i$ is even or odd; it is clear that the relationship is true if $ r = 1$. Let $ r $ be an arbitrary amount of steps, if we assume that the relationship is true for $ r $, then for $ r + 1 $ will be:
\[T^{r+1} (2^{k} m+i)=\left\{
\begin{array}[pos]{c}
3^{p_r} 2^{k-(r+1)} m+\frac{T^r (i)}{2}, T^r (i) \in 2\mathbb{N}\\\\
3^{1+p_{r}} 2^{k-(r+1)} m+\frac{3T^{r} (i)+1}{2}, T^r (i) \in 2\mathbb{N}+1
\end{array}
\right\}\]
\[\;\;\;\;\;\;\;\;\;\;\;\;\;=3^{p_{r+1}}2^{k-(r+1)} m+T^{r+1} (i).\]
If step is \emph{\textbf{d.c.s.}}  then $ p_{r+1}=p_{r} $; conversely if step is \emph{\textbf{i.c.s.}} then $ p_{r+1}=1+p_{r} $. This shows that the relationship is true. Now, we can to do $ r = k $ obtaining the following expression:
\[T^{r} (2^{k} m+i) \arrowvert_{r=k}=3^{p_{r}} 2^{k-r} m+T^{r}(i)\arrowvert_{r=k}=3^{p_{k} } m+T^{k}(i)\]
\end{proof}
\begin{lemma}Let $x_{0}$ be an arbitrary natural number, and let \textbf{o.c.p.} be a procedure initiated on $x_{0}$, then it is true the following relationship,
\begin{equation}
T^{n} (x_{0} )2^{\sum_{i=0}^{n} k_{i} }=3^{n}x_{0}+\sum_{r=1}^{n}(3^{n-r}2^{\sum_{i=0}^{r-1}k_{i}}),\quad k_{0}=0. 
\label{eqn20}
\end{equation}
\label{lemma8}
\end{lemma}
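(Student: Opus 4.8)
The plan is to prove (\ref{eqn20}) by induction on $n$, using only the defining relations of the \textbf{o.c.p.} recorded in the system (\ref{eqn10}), namely $3T^{j-1}(x_{0})+1=2^{k_{j}}T^{j}(x_{0})$ for each $j\geq 1$, together with the stipulation $k_{0}=0$. First I would settle the base case $n=1$: the right-hand side of (\ref{eqn20}) collapses to $3x_{0}+3^{0}2^{k_{0}}=3x_{0}+1$, while the left-hand side is $T^{1}(x_{0})2^{k_{0}+k_{1}}=T^{1}(x_{0})2^{k_{1}}$, and these agree precisely because the first equation of (\ref{eqn10}) reads $3x_{0}+1=2^{k_{1}}T^{1}(x_{0})$.

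For the inductive step, assume (\ref{eqn20}) holds at level $n\geq 1$. Take the $(n+1)$-st equation of (\ref{eqn10}), that is $3T^{n}(x_{0})+1=2^{k_{n+1}}T^{n+1}(x_{0})$, and multiply it through by $2^{\sum_{i=0}^{n}k_{i}}$. On the right this produces $T^{n+1}(x_{0})2^{\sum_{i=0}^{n+1}k_{i}}$, which is exactly the left-hand side of (\ref{eqn20}) at level $n+1$. On the left it produces $3\bigl(T^{n}(x_{0})2^{\sum_{i=0}^{n}k_{i}}\bigr)+2^{\sum_{i=0}^{n}k_{i}}$; now substitute the induction hypothesis for the bracketed factor, obtaining $3^{n+1}x_{0}+\sum_{r=1}^{n}3^{n+1-r}2^{\sum_{i=0}^{r-1}k_{i}}+2^{\sum_{i=0}^{n}k_{i}}$. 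It then remains only to notice that the trailing term $2^{\sum_{i=0}^{n}k_{i}}$ is exactly the $r=n+1$ summand of $\sum_{r=1}^{n+1}3^{n+1-r}2^{\sum_{i=0}^{r-1}k_{i}}$, since $3^{(n+1)-(n+1)}=1$ and $\sum_{i=0}^{(n+1)-1}k_{i}=\sum_{i=0}^{n}k_{i}$; absorbing that term yields (\ref{eqn20}) for $n+1$, closing the induction.

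Since this is a routine induction, I do not expect a genuine obstacle; the one place that demands care is the bookkeeping of the exponents of $2$ — specifically, verifying that the newly appended term drops in exactly as the last summand of the enlarged sum, and that the convention $k_{0}=0$ keeps the $n=1$ instance consistent with the general formula. As an alternative to induction one can argue directly: multiply the $j$-th equation of (\ref{eqn10}) by $3^{\,n-j}2^{\sum_{i=0}^{j-1}k_{i}}$ for $j=1,\dots,n$ and sum over $j$; the contributions involving $T^{j}(x_{0})$ for $1\leq j\leq n-1$ telescope between consecutive equations, leaving $3^{n}x_{0}$ on one side and $T^{n}(x_{0})2^{\sum_{i=0}^{n}k_{i}}$ together with the residual terms $\sum_{r=1}^{n}3^{\,n-r}2^{\sum_{i=0}^{r-1}k_{i}}$ on the other, which is precisely (\ref{eqn20}).
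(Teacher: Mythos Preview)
Your proof is correct and follows essentially the same approach as the paper: induction on $n$, with the base case read off from the first line of (\ref{eqn10}) and the inductive step obtained by combining the $(n+1)$-st relation $3T^{n}(x_{0})+1=2^{k_{n+1}}T^{n+1}(x_{0})$ with the hypothesis and then recognizing the extra $2^{\sum_{i=0}^{n}k_{i}}$ as the new $r=n+1$ summand. The only cosmetic difference is that the paper solves the recurrence for $T^{n}(x_{0})$ and substitutes into (\ref{eqn20}) before multiplying through by $3$, whereas you multiply the recurrence by $2^{\sum_{i=0}^{n}k_{i}}$ and substitute the hypothesis directly; the algebra is the same.
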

\begin{proof} If $n = 1$ we have $T(x_{0})2^{k_{1} }=3x_{0}+1$. The above relationship is in accord with (\ref{eqn10}). Let us assume in (\ref{eqn10}) one equation more, then 
\[3T^{n} (x_{0} )+1=2^{k_{n+1} } T^{n+1} (x_{0} )\]
, resolving with respect to $ T^ {n} (x _ {0}) $ and substituting in (\ref {eqn20}) will be,
\[\left(\frac{2^{k_{n+1}} T^{n+1} (x_{0} )-1}{3}\right)2^{\sum_{i=0}^{n}k_{i}}
=3^{n}x_{0}+\sum_{r=1}^{n}3^{n-r}2^{\sum_{i=0}^{r-1}k_{i} }.\]
Next, multiplying both members by $3$, and applying the distributive law in the left member is obtained,
\[2^{\sum_{i=0}^{n+1}k_{i}}  T^{n+1} (x_{0} ) -2^{\sum_{i=0}^{n}k_{i}} =3^{n+1}x_{0}+\sum_{r=1}^{n}3^{n+1-r}2^{\sum_{i=0}^{r-1}k_{i}}.\]
Adding in both members $2^{\sum_{i=0}^{n}k_{i}}$ we have the following,
\[2^{\sum_{i=0}^{n+1}k_{i}}  T^{n+1} (x_{0} )  =3^{n+1}x_{0}+\sum_{r=1}^{n}3^{n+1-r}2^{\sum_{i=0}^{r-1}k_{i}}+2^{\sum_{i=0}^{n}k_{i}}.\]
Incorporating $2^{\sum_{i=0}^{n}k_{i}}$ as part of the sum it is in the right member we obtain,
\[2^{\sum_{i=0}^{n+1}k_{i}}T^{n+1} (x_{0} )=3^{n+1}x_{0}+\sum_{r=1}^{n+1}3^{n+1-r}2^{\sum_{i=0}^{r-1}k_i}.\]
As it can be seen is obtained the expression (\ref{eqn20}) in which the only difference is the change of  $n$ by $n + 1$.
\end{proof}
\begin{theorem}Let \textbf{g.c.p.}, initiated in an arbitrary number $ x_{0}\in \mathbb{N},  x_{0}>1$. Let $ A_{k}=P(T^{k} (x_{0})>T^{k-1} (x_{0})) $ and $B_{k}=P(T^{k} (x_{0})<T^{k-1} (x_{0})) $ be the probabilities of increment and decrement in a $k$ step, respectively. Then, if  $ A_{k}=B_{k},\forall k\in\mathbb{N} $,  then there exist $k$ such that $T^{k} (x_{0})<x_{0}$.
\label{theorem9}
\end{theorem}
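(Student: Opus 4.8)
The plan is to reduce to the odd case, extract from Lemma \ref{lemma8} an explicit expression for $T^{n}(x_{0})$ in terms of $x_{0}$, the number $n$ of increasing steps and the exponents $k_{i}$, and then use the hypothesis $A_{k}=B_{k}$ to pin down the asymptotic shape of the exponent sequence.

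First I would dispose of the trivial case. If $x_{0}$ is even then $T^{1}(x_{0})=x_{0}/2<x_{0}$ and we are done with $k=1$, so assume $x_{0}$ is odd and run the \textbf{o.c.p.} of Definition \ref{definition4} on it. Within one \textbf{o.c.p.} block the \textbf{g.c.p.} performs one $(3x+1)/2$ step and then only halvings, so after the initial jump it decreases strictly down to the next odd iterate; hence the smallest entry of the block other than its first is that next odd iterate. Consequently the \textbf{g.c.p.} started at $x_{0}$ reaches a value $<x_{0}$ at some step $\geq 1$ if and only if some \textbf{o.c.p.} iterate $T^{n}(x_{0})$ with $n\geq 1$ is $<x_{0}$, so it suffices to produce such an $n$; the index asserted in the theorem is then $k=\sum_{i=1}^{n}k_{i}$.

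Second, writing $K_{n}=\sum_{i=0}^{n}k_{i}$ and $\sigma_{r}=\sum_{i=0}^{r}k_{i}$, Lemma \ref{lemma8} gives, after dividing by $2^{K_{n}}$,
\[
T^{n}(x_{0})=\frac{3^{n}}{2^{K_{n}}}\,x_{0}+\sum_{r=1}^{n}\frac{3^{n-r}}{2^{K_{n}-\sigma_{r-1}}}.
\]
Two features matter. (i) The leading coefficient $3^{n}/2^{K_{n}}$ is $<1$ exactly when $2^{K_{n}}>3^{n}$, i.e. when the mean exponent $\bar{k}:=K_{n}/n$ exceeds $\log_{2}3\approx 1.585$; this is where the relation $v_{m}=m\bar{k}$ announced in the introduction enters. (ii) The correction sum is a geometric-type sum governed by how the partial sums $\sigma_{r-1}$ grow; on the balanced profile $\sigma_{r-1}=(r-1)\bar{k}$, $K_{n}=n\bar{k}$ the series sums explicitly and $T^{n}(x_{0})\to(2^{\bar{k}}-3)^{-1}$, which at $\bar{k}=2$ equals $1$ and is $<x_{0}$ whenever $\bar{k}$ is close enough to $2$.

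Third, I would bring in the hypothesis. Each \textbf{o.c.p.} block contributes exactly one \textbf{i.c.s.} and $k_{i}-1$ steps that are \textbf{d.c.s.}, so among the first $K_{n}$ Collatz steps there are $n$ increments and $K_{n}-n=n(\bar{k}-1)$ decrements, and the proportion of increments is $n/K_{n}=1/\bar{k}$. Demanding $A_{k}=B_{k}$ for every $k$ forces increments and decrements to occur with the same limiting frequency, so $1/\bar{k}\to 1/2$, i.e. $\bar{k}\to 2>\log_{2}3$. Then for $n$ large we are in situation (i)--(ii), some $T^{n}(x_{0})<x_{0}$ appears, and translating back to the \textbf{g.c.p.} index finishes the proof.

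The hard part is the passage from the second step to the third: converting the frequency statement $A_{k}=B_{k}$ into honest control over the particular exponent sequence $k_{1},k_{2},\dots$ attached to this single $x_{0}$, and in particular bounding the correction sum $\sum_{r}3^{n-r}2^{\sigma_{r-1}-K_{n}}$ uniformly. A bad clustering of the $k_{i}$ (say many small ones early on) could a priori inflate that sum even while $\bar{k}$ is near $2$; ruling this out needs $K_{n}=n\bar{k}$ with $\bar{k}\to 2$ together with a law-of-large-numbers smoothing of the partial sums $\sigma_{r}$, and this is precisely the point at which a careful referee would press.
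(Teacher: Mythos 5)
Your proposal follows essentially the same route as the paper's own proof: both start from Lemma \ref{lemma8}, translate the hypothesis $A_k=B_k$ into the claim that the mean exponent $\bar{k}=\frac{1}{n}\sum_{i=1}^{n}k_i$ tends to $2$ (the paper writes $\sum_{i=0}^{n}k_i=n+D_n-1$ and sets $\frac{D_n-1}{n}=1$), and then observe that on this balanced profile the expression for $T^{n}(x_0)$ decays geometrically toward $1$ and hence eventually falls below $x_0$. The gap you flag at the end --- turning the frequency statement $A_k=B_k$ into honest control of the particular exponent sequence $k_1,k_2,\dots$ and of the correction sum --- is precisely the step the paper itself leaves as an admitted approximation (``In practice it is only an approximation''), so your attempt is faithful to, and no less rigorous than, the argument given there.
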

\begin{proof} \textbf{First demonstration}: Let $ x_{0}$ be an arbitrary natural number and let \emph{\textbf{g.c.p.}} initiated in it. By virtue of lemma \ref{lemma8} we have the following expression for de odd numbers of \emph{\textbf{g.c.p.}},
\[T^{n} (x_{0} )=\frac{3^{n}x_{0}}{2^{\sum_{i=0}^{n}k_{i}}} +\frac{\sum_{r=1}^{n}3^{n-r}2^{\sum_{i=0}^{r-1}k_{i}} }{2^{\sum_{i=0}^{n}k_{i}}}, k_{0}=0.\]
By virtue of definition of \emph{\textbf{c.s.}} (see definition \ref{definition5}) the number $3$ it appear  exclusively in \emph{\textbf{i.c.s.}} $ (T(x_i)>x_i) $.  It mean that $n$ represent  increments only.\\On the other  hand, the number $2$ appear in all step; this mean that number $2$  which is in the denominator in right is as follow $\sum_{i=0}^{n}k_{i} =n+D_{n}-1$, being $D_{n}$ the amount of \emph{\textbf{d.c.s.}} $(T(x_i)<x_i)$; then we can to write the following:
\[T^n (x_0 )=x_0  \frac{3^n}{2^{n+D_{n}-1 }} +\frac{\sum_{r=1}^{n}3^{n-r}2^{\sum_{i=0}^{r-1}k_{_i}}}{2^{n+D_{n}-1 }} , k_0=0.\]
As $n+D_{n}-1 =(1+\frac{D_{n}-1}{n})n$ will be the following,
\[T^n (x_0 )=x_0  \frac{3^n}{2^{(1+\frac{D_{n}-1}{n})n }} +\frac{\sum_{r=1}^{n}3^{n-r}2^{\sum_{i=0}^{r-1}k_{_i}}}{2^{(1+\frac{D_{n}-1}{n})n }} , k_0=0.\]
However, if \emph{\textbf{g.c.p.}} is sufficient prolonged ($n$ is very large)  then we can to consider  $\frac{D_{n}-1}{n}=1$(In practice it is only an approximation $\frac{D_{n}-1}{n}\approx 1$ for to guarantee that $\frac{3^n}{2^{(1+\frac{D_{n}-1}{n})n }}\approx \frac{3^n}{4^{n}}<1$). The expression corresponding to $T^n (x_0 )$ can be now replaced by the fallowing,
\[T^{n} (x_0 )=x_0  \frac{3^n}{2^{2n}} +\frac{\sum_{r=1}^n3^{n-r} 2^{\sum_{i=0}^{r-1}k_i}}{2^{2n}},\]
and hence,
\begin{equation}
T^{n} (x_0 )=x_0 \left(\frac{3}{4}\right)^n+\left(\frac{3}{4}\right)^{n} \sum_{r=1}^{n}\frac{2^{\sum_{i=0}^{r-1}k_i}}{3^r}.
\label{eqn90}
\end{equation}
In the other hand, if we consider certain $m$ of additional steps we obtain that,
\[T^{n+m} (x_0 )=x_0\left(\frac{3}{4}\right)^{n+m}+\left(\frac{3}{4}\right)^{n+m} \sum_{r=1}^{n+m}\frac{2^{\sum_{i=0}^{r-1}k_i}}{3^r}.\]
Next, by decomposing of the sum, we have
\begin{equation}
T^{n+m} (x_0 )=x_0 \left(\frac{3}{4}\right)^{n+m}+\left(\frac{3}{4}\right)^{n+m} \sum_{r=1}^{n-1}\frac{2^{\sum_{i=0}^{r-1}k_i}}{3^r} +\left(\frac{3}{4}\right)^{n+m} \sum_{r=n}^{n+m}\frac{2^{\sum_{i=0}^{r-1}k_i}}{3^r}.
\label{eqn60}
\end{equation}
Let us consider now the potency $2^{\sum_{i=0}^{r-1}k_i}$. We know that $\sum_{i=0}^{r-1}k_i=r+D_{r}-2$ being $D_r$ the decrements until the step $r-1$ (See definition \emph{\textbf{o.c.p.}} and compare with definitions \emph{\textbf{i.c.s.}} and \emph{\textbf{d.c.s.}} respectively), therefore the expression will be,
\[2^{\sum_{i=0}^{r-1}k_i} = 2^{r+D_r-2}= 2^{r(1+\frac{D_r}{r})-2}.\]
It follows from this that (\ref{eqn60}) can to be write as,
\[T^{n+m} (x_0 )=x_0 \left(\frac{3}{4}\right)^{n+m}+\frac{1}{4}\left(\frac{3}{4}\right)^{n+m} \sum_{r=1}^{n-1}\frac{2^{r\left(1+\frac{D_r}{r}\right)}}{3^r} +\frac{1}{4}\left(\frac{3}{4}\right)^{n+m} \sum_{r=n}^{n+m}\frac{2^{r\left(1+\frac{D_r}{r}\right)}}{3^r}.\]
In accord with $r\geq n$ we assume in the last term of the above sum that $\frac{D_r}{r}=1 $ (In practice it is only an approximation $\frac{D_{n}}{n}\approx 1$ for to guarantee that $\frac{3^n}{2^{(1+\frac{D_{n}}{n})n }}\approx \frac{3^n}{4^{n}}<1$), and hence we have,
\begin{equation}   
\begin{array}{cc}
T^{n+m} (x_0 )=\\
\displaystyle{=x_0 \left(\frac{3}{4}\right)^{n+m}+\frac{1}{4}\left(\frac{3}{4}\right)^{n+m} \sum_{r=1}^{n-1}\frac{2^{r\left(1+\frac{D_r}{r}\right)}}{3^r} +\frac{1}{4}\left(\frac{3}{4}\right)^{n+m} \sum_{r=n}^{n+m} \left(\frac{4}{3}\right)^r}.
\end{array}
\label{eqn70}
\end{equation}
If $m$ goes to infinite we have,
\begin{equation}
^{lim}_{m\rightarrow\infty}\left(T^{n+m} (x_{0}) \right)=^{lim}_{m\rightarrow\infty}\frac{1}{4}\left(\frac{3}{4}\right)^{n+m} \sum_{r=n}^{n+m} \left(\frac{4}{3}\right)^r.
\label{eqn30}
\end{equation}
Using the following known formula,
\begin{equation}
\sum_{r=n}^{n+m} \left(\frac{4}{3}\right)^r=3\left(\frac{4}{3}\right)^{n}\left(\left(\frac{4}{3}\right)^{m+1}-1\right)
\label{eqn80}
\end{equation}
, we obtain the following,
\[^{lim}_{m\rightarrow\infty}\left(T^{n+m} (x_{0}) \right)=^{lim}_{m\rightarrow\infty}\frac{3}{4}\left(\frac{3}{4}\right)^{n+m} \left(\frac{4}{3}\right)^{n}\left(\left(\frac{4}{3}\right)^{m+1}-1\right)=\]
\[=^{lim}_{m\rightarrow\infty}\left(1-\left(\frac{3}{4}\right)^{m+1}\right),\]
and hence,
\[^{lim}_{m\rightarrow\infty}\left(T^{n+m} (x_0 )\right)=1.\]
However, this signify that exist a finite number $n+m$ such that
\[T^{n+m} (x_0 )< x_0. \]
\textbf{Second demonstration}: Beginning equation  (\ref{eqn70}) we take common factor $\left(\frac{3}{4}\right)^{n+m}$ in right  member, and hence,
\[T^{n+m} (x_0 )=\left(\frac{3}{4}\right)^{n+m}\left(x_0 +\frac{1}{4}\sum_{r=1}^{n-1}\frac{2^{r\left(1+\frac{D_r}{r}\right)}}{3^r}\right) +\frac{1}{4}\left(\frac{3}{4}\right)^{n+m} \sum_{r=n}^{n+m} \left(\frac{4}{3}\right)^r.\]
Let us use the inductive expression (\ref{eqn80}), and will be the following,
\[T^{n+m} (x_0 )=\left(\frac{3}{4}\right)^{n+m}\left(x_0 +\frac{1}{4}\sum_{r=1}^{n-1}\frac{2^{r\left(1+\frac{D_r}{r}\right)}}{3^r}\right) +1-\left(\frac{3}{4}\right)^{m+1}.\]
Next, by multiplying and dividing, the first sum of the right member by $\frac{4^{n-1}}{3^{n-1}}$ we obtain, 
\[T^{n+m} (x_0 )=\left(\frac{3}{4}\right)^{n+m}\frac{4^{n-1}}{3^{n-1}}\left(\frac{3^{n-1}}{4^{n-1}}x_0 +\frac{1}{4}\frac{3^{n-1}}{4^{n-1}}\sum_{r=1}^{n-1}\frac{2^{r\left(1+\frac{D_r}{r}\right)}}{3^r}\right) +1-\left(\frac{3}{4}\right)^{m+1}.\]
By comparing whit (\ref{eqn90}), we can to note that the sum in brackets is exactly $T^{n-1}(x_0)$, and hence,
\[T^{n+m} (x_0 )=\left(\frac{3}{4}\right)^{m+1}T^{n-1}(x_0) +1-\left(\frac{3}{4}\right)^{m+1}.\]
Let us assume now that $T^{n-1}(x_0)\leq T^{n+m} (x_0 )$ which strengthens the proof of convergence, is then,
\[T^{n+m} (x_0 )\leq\left(\frac{3}{4}\right)^{m+1}T^{n+m} (x_0 ) +1-\left(\frac{3}{4}\right)^{m+1}\]
, and hence we obtain,
\[T^{n+m} (x_0 )\leq1.\]
However, this signify that exist a finite number $n+m$ such that
\[T^{n+m} (x_0 )< x_0 .\]
The theorem is proofed by the second path.\end{proof}

\begin{theorem}Let $\Gamma_{M}$ be a set of successive natural numbers since $1$ until $ 2^M$. Let \textbf{g.c.p.} initiated in an arbitrary number $x_0\in\Gamma_{M}$. Let $ A_{k}=P(T^{k} (x_{0})>T^{k-1} (x_{0})) $ and $B_{k}=P(T^{k} (x_{0})<T^{k-1} (x_{0})) $ be, respectively, the probabilities of increment and decrement in a \textbf{\emph{c.s.}} $k\leq M-1$, then the following is true:
\[A_k=B_k=\frac{1}{2},\forall x_i\in\Gamma_{M}.\]
\label{theorem10}
\end{theorem}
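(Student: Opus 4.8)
The plan is to recognize $A_k$ and $B_k$ as classical (counting) probabilities on the finite set $\Gamma_M$, to reduce them to counting residue classes modulo $2^k$, and to evaluate that count via the ``parity vector'' bijection attached to the accelerated map $T$, which I will extract from Lemma \ref{lemma7}.

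First I would make the reduction. Since $(3t+1)/2>t$ for every $t\ge 1$ and $t/2<t$ for every $t\ge 1$, the $k$-th step of the \textbf{g.c.p.} is an \textbf{i.c.s.} exactly when $T^{k-1}(x_0)$ is odd and a \textbf{d.c.s.} exactly when $T^{k-1}(x_0)$ is even, with no third possibility; hence $A_k+B_k=1$, so it suffices to prove $A_k=\tfrac12$. Reading $A_k$ as a classical probability over the uniform choice of $x_0$ in $\Gamma_M=\{1,\dots,2^M\}$, the quantity $2^M A_k$ is the number of $x_0\in\Gamma_M$ for which $T^{k-1}(x_0)$ is odd, so the theorem is equivalent to: exactly half the integers of $\{1,\dots,2^M\}$ send $T^{k-1}$ to an odd value.

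Next I would show that, for $1\le j\le M$, the parity $\epsilon_j(x_0):=T^{j-1}(x_0)\bmod 2$ (equivalently, the ``type'' of the $j$-th step) depends only on $x_0\bmod 2^{j}$. Writing $x_0=2^{j-1}m+i$ with $0\le i<2^{j-1}$, Lemma \ref{lemma7} gives $T^{j-1}(x_0)=3^{p_{j-1}}m+T^{j-1}(i)$ with $p_{j-1}=p_{j-1}(i)$ depending only on $i$ (visible from the proof of that lemma); since $3^{p_{j-1}}$ is odd, $\epsilon_j(x_0)\equiv m+T^{j-1}(i)\pmod 2$, which is fixed once $m\bmod 2$ and $i$ are fixed, i.e.\ once $x_0\bmod 2^{j}$ is fixed. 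Then I would prove by induction on $k$ that the map $\Phi_k\colon\mathbb{Z}/2^k\to\{0,1\}^k$ sending $x_0\bmod 2^k$ to $(\epsilon_1(x_0),\dots,\epsilon_k(x_0))$ is a bijection: the case $k=1$ is $x_0\mapsto x_0\bmod 2$, and for the step from $k$ to $k+1$ one writes $x_0=2^k m+i$ with $0\le i<2^k$, $m\in\{0,1\}$, notes that $(\epsilon_1,\dots,\epsilon_k)$ is governed by $i$ alone and ranges over all of $\{0,1\}^k$ by the inductive hypothesis, and reads off from $T^{k}(x_0)=3^{p_k}m+T^{k}(i)$ (Lemma \ref{lemma7} again) that $\epsilon_{k+1}(x_0)\equiv m+T^k(i)\pmod 2$, so that for each fixed $i$ the two choices of $m$ realize the two possible values of $\epsilon_{k+1}$; hence $\Phi_{k+1}$ is a bijection onto $\{0,1\}^{k+1}$.

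Finally I would conclude. By the bijection $\Phi_k$, over any block of $2^k$ consecutive integers the last coordinate $\epsilon_k$ equals $1$ exactly $2^{k-1}$ times. Since $k\le M-1$ gives $2^k\mid 2^M$, the set $\Gamma_M$ is a disjoint union of $2^{M-k}$ such blocks, so the number of $x_0\in\Gamma_M$ with $T^{k-1}(x_0)$ odd is $2^{M-k}\cdot 2^{k-1}=2^{M-1}$, whence $A_k=2^{M-1}/2^M=\tfrac12$ and, by the first reduction, $B_k=\tfrac12$. The main obstacle is the inductive step of the parity-vector bijection: one has to see that refining the modulus from $2^k$ to $2^{k+1}$ exactly doubles the set of attainable parity patterns, and this is precisely where the odd factor $3^{p_k}$ multiplying the fresh high-order bit $m$ in Lemma \ref{lemma7} does the real work; the passage to $\Gamma_M$ and the evaluation $A_k=\tfrac12$ are then routine counting.
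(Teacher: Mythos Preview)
Your proof is correct and follows essentially the same route as the paper: both arguments use Lemma~\ref{lemma7} to see that each residue class modulo $2^{n}$ splits, modulo $2^{n+1}$, into one subclass with $T^{n}$ odd and one with $T^{n}$ even (the odd factor $3^{p_n}$ in front of $m$ being the crucial point), and then count over $\Gamma_M$. Your packaging via the parity-vector bijection $\Phi_k$ is the standard (Terras-type) formulation and is somewhat crisper than the paper's case analysis, but the substance is the same.
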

\begin{remark}Henceforth we imagine that Collatz procedure starts so only in a number, extracted randomly  from a finite  set  of successive numbers. In this case we have designated as $ \Gamma_ {M} $  a class of these finite sets that starts in the unit. If on this approach is demonstrated conjecture, it will be shown simultaneously in general, since every natural number can indeed be drawn at random from a finite set. You only have to enumerate some objects and perform with them the usual extraction experiment of such objects from an urn.
\label{remark1}
\end{remark}
\begin{proof}Let us suppose that the number $x_0$ is extracted from $\Gamma_{M}$, randomly, then the probability that $x_0$ it is even number, is equivalent to be odd, or $\frac{1}{2}$. By application of \textbf{\emph{c.s.}} on $x_0$, increment have a place when $\frac{(3 x_0+1)}{2}$, on the contrary, one decrement occurs when $\frac{x_0}{2}$, and we have: 
\[T(x_0 )=
\left\{\begin{array}[pos]{c}
\frac{x_0}{2}\ \ \ ,x_0\in2\mathbb{N}\\\\
\frac{3x_0+1}{2},x_0\in2\mathbb{N}+1
\end{array}\right\}.\]
Let us suppose now the elements of $\Gamma_{M}$ separated in accordance with equivalent class of the integers module $2^n$ such that $n\leq M-1$ (Each of these sets consists of $ 2 ^ {m-n} $ elements of the same equivalence class). Let us suppose in turn that each one of these class have the following properties:
\begin{enumerate}
\item Its elements generate during \emph{\textbf{g.c.p.}} initiated in them, a sequence whose step $n$ it is of type \emph{\textbf{d.c.s.}} ($T^n (x_0)<T^{n-1} (x_0)$ then we say that the set is of the type $C_i^\downarrow (n)$).
\item Its elements generate during \emph{\textbf{g.c.p.}} initiated in them, a sequence whose step $n$ it is of type \emph{\textbf{i.c.s.}} ($T^n (x_0)>T^{n-1} (x_0)$ then we say that the set is of the type $C_i^\uparrow (n)$).
\end{enumerate}
From lemma \ref{lemma7} it follows that the elements of the class $2^{n} m+i$ they arrive, in the step $n$ of \textbf{\emph{c.s.}}, at the number $3^{p_n } m+T^{n} (i)$ being $p_n\leq n$ the amount of increments occurred during $n$ Collatz steps; and hence it is the following: 
\[T^{n} (2^{n} m+i)=3^{p_{n} } m+T^{n} (i).\]
Depending on the parity of $m$ we have two cases:
\[T^{n} (2^{n} m+i)=T^{n}\left(\left\{
\begin{array}[pos]{c}
2^{n+1} r+i\ \ , if\ m=2r\\\\
2^{n+1} r+(2^{n}+i), if\ m=2r+1
\end{array}\right\}\right)\]
\[\;\;\;\;\;\;\;\;\;\;\;\;\;\;\;\;\;\;\;\;\;=\left\{
\begin{array}[pos]{c}
3^{p_n } 2r+T^{n} (i)\ ,if\ m=2r\\\\
3^{p_n } 2r+3^{p_n }+T^{n} (i),if\ m=2r+1
\end{array}\right\}.\]
It is to say, half of the numbers in the set $\Gamma_{M}$ they become in the step $n$ to a certain form (Superior form in the above square bracket), and the other half to the other form (Inferior form in the above square bracket).\\Next, in the step $n + 1$ we have the following: 
\begin{enumerate}
\item If $T^{n} (i)$ is an even number:
\begin{itemize}
\item  The number $3^{p_n} 2r+T^{n} (i)$ will be even and we have \emph{\textbf{d.c.s.}}:
\[T(3^{p_n } 2r+T^n (i))=3^{p_n } r+\frac{T^n (i)}{2}<3^{p_n } 2r+T^n (i).\]
\item In the other hand the number $3^{p_n } 2r+3^{p_n }+T^{n} (i)$ will be odd and we have \emph{\textbf{i.c.s.}}:
\[T(3^{p_n } 2r+3^{p_n }+T^{n} (i))=\frac{3^{p_{n+1}} 2r+3^{p_n+1}+3T^{n} (i)+1}{2}>3^{p_n } 2r+3^{p_n }+T^{n} (i).\]
\end{itemize}
\item If $T^{n} (i)$ is odd number:
\begin{itemize}
\item The number $3^{p_n } 2r+T^{n} (i)$ will be odd and we have \emph{\textbf{i.c.s.}}:
\[T(3^{p_n} 2r+T^{n} (i))=\frac{3^{p_{n+1}} 2r+3T^{n} (i)+1}{2}>3^{p_n} 2r+T^{n} (i).\]
\item Whoever $3^{p_n} 2r+3^{p_n}+T^{n} (i)$ will bi even and we have \emph{\textbf{d.c.s.}}:
\[T(3^{p_n } 2r+3^{p_n }+T^{n} (i))=\frac{3^{p_n } 2r+3^{p_n }+T^{n} (i)}{2}<3^{p_n } 2r+3^{p_n }+T^{n} (i).\]
\end{itemize}
\end{enumerate}
As $i$ is an arbitrarily selected number, such that $0 \leq i\leq 2^n$, the above have a place for all the equivalents classes of $\frac{\mathbb{Z}}{\mathbb{Z}_{2^n}}$, that is to say, in the step $n+1$ an \emph{\textbf{d.c.s.}} it is experienced by $\frac{2^{M}}{2}$ elements, while the other half have an \emph{\textbf{i.c.s.}}.\\ And therefore if $n\leq M-1$ (Which guarantees equal cardinality of the equivalence classes), in any step such that $n\leq M-1$ will be so many elements in classes $C_i^\downarrow (n)$ as in the classes $C_i^\uparrow (n)$. As it is true for $n = 1$, (As was seen it in the case $\mathbb{Z}_{2^1}=\mathbb{Z}_2$), then will be true for all numbers $n \leq M-1$. \\It is means that for all $x_0$ such that $x_0\in\Gamma_{M}$, randomly extracted, the probability of \textbf{\emph{d.c.s.}} it is identically to the probability of \textbf{\emph{i.c.s.}} in all \textbf{\emph{c.s.}} such that $n\leq M-1$. In other hand, everyone element of $\Gamma_{M}$ may be considered as randomly ``selected'' in an experiment with identical probabilities $\frac{1}{2^{M}}$ for any one of these.
\end{proof}\\
Let us strengthen the understanding of the above theorem with concrete examples:\\
Example 1. 
\[\Gamma_{2}=\left\{1,2,3,4\right\}.\]
We have above that $\Gamma_{2}$ have the first element $1$ and the last is $2^{2}=4$ as the theorem say. In this case only one \textbf{\emph{c.s.}} is possible in which the theorem is fulfilled. In fact:
\[ T(1)=\frac{1*3+1}{2}=2,\]
\[T(2)=\frac{2}{2}=1,\]
\[T(3)=\frac{3*3+1}{2}=5,\]
\[T(4)=\frac{4}{2}=2.\]
As is anticipated by the theorem, the half of the elements of $\Gamma_{2}$,  have an \textbf{\emph{i.c.s.}} ($\left\{1,3 \right\}$), while the other half have \textbf{\emph{d.c.s.}} ($\left\{2,4 \right\}$).\\
Example 2.
\[\Gamma_{3}=\left\{1,2,3,4,5,6,7,8\right\}.\]
First step:
\[ T(1)=\frac{1*3+1}{2}=2,\]
\[ T(2)=\frac{2}{2}=1,\]
\[ T(3)=\frac{3*3+1}{2}=5,\]
\[ T(4)=\frac{4}{2}=2,\]
\[ T(5)=\frac{5*3+1}{2}=8,\]
\[ T(6)=\frac{6}{2}=3,\]
\[ T(7)=\frac{7*3+1}{2}=11,\]
\[ T(8)=\frac{8}{2}=4.\]
In this case the numbers $\left\{2,4,6,8\right\}$ have a \textbf{\emph{d.c.s.}} while $\left\{1,3,5,7\right\}$ have a \textbf{\emph{i.c.s.}} as expected.\\
Step two:
\[ T^{2}(1)=T(2)=\frac{2}{2}=1,\]
\[ T^{2}(2)=T(1)=\frac{3*1+1}{2}=2,\]
\[ T^{2}(3)=T(5)=\frac{3*5+1}{2}=8,\]
\[ T^{2}(4)=T(2)=\frac{2}{2}=1,\]
\[ T^{2}(5)=T(8)=\frac{8}{2}=4,\]
\[ T^{2}(6)=T(3)=\frac{3*3+1}{2}=5,\]
\[ T^{2}(7)=T(11)=\frac{3*11+1}{2}=17,\]
\[ T^{2}(8)=T(4)=\frac{4}{2}=2.\]
As the theorem it states the numbers $\left\{2,8,4\right\}$ that were obtained in the first step, have \textbf{\emph{d.c.s.}} in the step two (We have in count that the number $2$ appear twice in the first step, it is to say $1\longrightarrow 2$ and $4\longrightarrow 2$), while $\left\{1,5,3,11\right\}$ that were obtained in the first step, have \textbf{\emph{i.c.s.}} in the step two. Theorem is confirmed again.\\ 
The theorem does not hold for another step in this case because the most largest of the numbers of $\Gamma_{3}$ is the third power of $2$, while the theorem is true for $M-1$ being $2^{M}$ the greater number of $\Gamma_{M}$.\\The reader can, following this idea, check the theorem in more extensive cases.
\begin{corollary}The Collatz conjecture is true.
\label{corollary1}
\end{corollary}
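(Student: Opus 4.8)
The plan is to obtain the corollary from Theorems \ref{theorem9} and \ref{theorem10} by an infinite–descent argument over the natural numbers. First I would fix an arbitrary $x_0\in\mathbb{N}$ with $x_0>1$ and choose $M$ with $2^M\geq x_0$, so that $x_0\in\Gamma_M=\{1,\dots,2^M\}$. By Theorem \ref{theorem10}, for the experiment of drawing a number at random from $\Gamma_M$ one has $A_k=B_k=\tfrac12$ for every Collatz step $k\leq M-1$; letting $M\to\infty$ extends this equality $A_k=B_k$ to every $k\in\mathbb{N}$. This is precisely the hypothesis of Theorem \ref{theorem9}, which then yields a step $k$ with $T^k(x_0)<x_0$: the \textbf{\emph{g.c.p.}} started at $x_0$ reaches, in finitely many steps, a value strictly below $x_0$.

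Next I would run the descent. Let $S\subseteq\mathbb{N}$ be the set of natural numbers that are \emph{not} Collatz numbers in the sense of Definition \ref{definition6}, i.e.\ those whose \textbf{\emph{g.c.p.}} never enters the cycle $[2,1]$. Assuming for contradiction that $S\neq\emptyset$, let $x_0=\min S$; since $1$ is obviously a Collatz number we have $x_0>1$, so the previous paragraph supplies a $k$ with $y:=T^k(x_0)<x_0$. As $y$ is a natural number smaller than $\min S$, we get $y\notin S$, hence the \textbf{\emph{g.c.p.}} started at $y$ reaches $[2,1]$. But the \textbf{\emph{g.c.p.}} started at $x_0$ agrees, from step $k$ onward, with the one started at $y$, so it too reaches $[2,1]$, contradicting $x_0\in S$. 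Therefore $S=\emptyset$, every natural number is a Collatz number, and this is exactly the Collatz conjecture.

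The step I expect to be the real obstacle is the passage from Theorem \ref{theorem10} to the hypothesis of Theorem \ref{theorem9}. Theorem \ref{theorem10} certifies $A_k=B_k=\tfrac12$ only for the \emph{finitely many} steps $k\leq M-1$, and only relative to the uniform draw from the \emph{finite} ensemble $\Gamma_M$; Theorem \ref{theorem9}, on the other hand, is stated with the hypothesis holding for \emph{all} $k$ and draws a deterministic conclusion about the single orbit of the fixed $x_0$. Bridging this gap honestly requires (i) showing that the family of finite–$M$ statements, taken together over all admissible $M$, legitimately delivers ``$A_k=B_k$ for all $k\in\mathbb{N}$'', and (ii) checking that the guaranteed descent below $x_0$ already occurs within the range $k\leq M-1$ once $M$ is chosen large enough in terms of $x_0$, so that no step outside the certified range is used. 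Equivalently, one must reconcile the ensemble meaning of $A_k,B_k$ in Theorem \ref{theorem10} with their per–trajectory use in Theorem \ref{theorem9}; once that identification is justified, the descent argument above is routine.
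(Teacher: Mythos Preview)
Your route and the paper's diverge substantially. You pass to $M\to\infty$ to upgrade Theorem~\ref{theorem10} to all $k$, invoke Theorem~\ref{theorem9}, and finish with a clean minimal-counterexample descent. The paper instead avoids the limit in $M$: it first runs an empirical simulation of random $0$--$1$ sequences of length $100$ and computes sample confidence intervals to argue that the approximation $D_n/n\approx 1$ underlying Theorem~\ref{theorem9} is already adequate after about $100$ steps; it then cites the Applegate--Lagarias lower bound $\sigma_\infty(x)>6.14316\log x$ to argue that $100$ steps suffice for the relevant numbers; and finally it partitions $\mathbb{N}$ into consecutive blocks of length $2^M$ (with $M\geq 400$), asserts each block is ``isomorphic'' to $\Gamma_M$ so that Theorem~\ref{theorem10} applies block by block, and performs an induction over blocks rather than your single descent over individual integers. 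What this buys the paper is a device to sidestep your $M\to\infty$ move (which, as you note, has no clear meaning for a uniform measure on $\mathbb{N}$); what your descent buys is a much shorter and more transparent reduction once the hypothesis of Theorem~\ref{theorem9} is taken as given.

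The obstacle you flag in your last paragraph is exactly the crux, and it is genuine. Theorem~\ref{theorem10} is an \emph{ensemble} statement about a uniform draw of $x_0$ from $\Gamma_M$, whereas Theorem~\ref{theorem9} requires a \emph{per-trajectory} hypothesis for the specific $x_0$ you fixed; sending $M\to\infty$ does not convert the former into the latter, and no uniform probability on $\mathbb{N}$ exists to appeal to. The paper's substitutes --- confidence intervals from a simulated i.i.d.\ Bernoulli process, and the claimed isomorphism of translated blocks with $\Gamma_M$ --- are attempts to address precisely this, but they do not close the gap either: the simulation concerns an auxiliary random process rather than actual Collatz orbits, and the shifted blocks $[p,q]$ are not covered by the statement of Theorem~\ref{theorem10}, which is formulated only for $\{1,\dots,2^M\}$. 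So your diagnosis is on target; neither your limiting argument nor the paper's statistical-plus-block machinery supplies a rigorous passage from the ensemble equality $A_k=B_k$ to the deterministic descent $T^k(x_0)<x_0$ for a fixed $x_0$.
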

\begin{proof}First, we perform some important statistic considerations:\\ We have referred many times to the consideration of $ n $ large enough. With respect this we have to perform the following experiment:\\ Let us generate random numbers  $x_{n+D_{n}}$ in the set $\left\{0,1\right\}\subset \mathbb{N}$ (This is a proses analogous  from statistic point of view to a sequence  of \textbf{\emph{c.s.}} of increment (numbers 1) and  decrement (numbers 0) in a \textbf{\emph{p.c.}}). Have place that, the relationship $\xi_{n+D_{n}}=\frac{D_{n}}{n}$  where quantity  of zeros (Let us denote as $D_{n}$ by analogy with the decrements) and unities (Let us denote as $n$ by analogy with the increments) that are generated it is sufficiently close to the unit for a sequence $x_{0},x_{1},\ldots,x_{n+D_{n}}$ with  $n+D_{n}$ not as big. Let us suppose that $n+D_{n}=100$, and we consider $14$ values of $\xi_{100}$. Let us denote as $s_{100}$ the standard deviation of the each sample. Let us present the results:\\ 
\[\begin{array}{ccccc}
sample&\xi_{100}&1+\xi_{100}&s_{100}&2^{1+\xi_{100}}\\
1&0,7241&1,7241&0,4960&3,3038\\
2&1,0833&2,0833&0,5021	&4,2379\\
3&1,1277&2,1277&0,5016&4,3701\\
4&1,0833&2,0833&0,5021&4,2379\\
5&1,3256&2,3256&0,4976&5,0127	\\
6&0,7857	&1,7857&0,4989&3,4479\\
7&1,0833&2,0833&0,5021&4,2379	\\
8&1,0833	&2,0833&0,5021&4,2379\\
9&1,8571&2,8571&0,4794&7,2458\\
10&0,8519&1,8519&0,5009&3,6096\\
11&1,0408&2,0408&0,5024&4,1148\\
12&0,9231&1,9231&0,5021&3,7923\\
13&0,9608&1,9608&0,5024&3,8927\\
14&1,1739&2,1739&0,5009&4,5125
 \end{array}\]
Having in mine the above data we can to perform the following table (We suppose an normal distribution for !'$amount\ of\ samples\gg 14$! it which is reasonable):
 \[\begin{array}{cc}
confidence (\%) &interval \ for\  \mu=E(1+\xi_{100}) \\
95	&3,8953 \leq  \mu \leq	4,9174 \\
98	&3,7794	\leq  \mu  \leq	5,0333	\\
99	&\ \ \ \ 3,6938	\leq  \mu \leq	5,1189\ \ \ \  ,	
\end{array}\]
and hence,
\[\begin{array}{cc}
confidence (\%) &interval \ for\  \chi=2^\mu \\
95	&3,8953 \leq  \chi \leq	4,9174 \\
98	&3,7794	\leq  \chi  \leq	5,0333	\\
99	&3,6938	\leq  \chi \leq	5,1189	
\end{array}\]
For generating random numbers in each sample is used the corresponding function of the electronic tabulator Microsoft Excel 2010. For determine the reasons between the zeros and unities, and the confidence intervals of exponential function, statistical SPSS system version 15.0 is used.\\The last table justifies our change of $\frac{3}{2^{1+\frac{D_{n}}{n}}}$ by $\frac{3}{4}$ in the proof of theorem \ref{theorem9} without having gone to infinity.\\ The above it is in accord with the paper of David Applegate and Jeffrey C. Lagarias\cite{7}$(2003)$  under which a total stopping time could be $\sigma_{\infty}(x) > 6.14316 \log(x)$, then a number $x$ that has $\sigma_{\infty}(x)=100$ it implies that $1.17371*10^7>x$; it is to say, the total stopping time is very low for really large numbers. In this case we have $1.17371*10^7<2^{101}$ that it belong to the set $\Gamma_{101}$ and hence the theorem \ref{theorem10} guarantees an amount of $100$ steps with probability $\frac{1}{2}$ for increment and decrement respectively. But in accord with the second demonstration of theorem \ref{theorem9}, an amount of $100$ steps are sufficient to ensure convergence to $1$.
\begin{remark}It is very important to say that, when we say that $100$ steps are sufficient to
ensure the convergence, that does not mean in any way that the convergence actually it is produced in an amount of $100$ steps in the general case; precisely why, in theorem \ref{theorem9} the sum $n+m$ appears and not just $n$;  meanwhile $n$ guarantees the convergent form of the law that generating the elements of the succession, while $m$ leads that form the effective convergence.
\label{remark2}
\end{remark}
From this it follow that we can to ensure that any set $\Gamma_{M}, M\geq 101$ (Provided that $ M $ is such that in principle allows the extraction of random numbers) guaranteed by theorem \ref{theorem10}, an amount of $100$ \textbf{\emph{c.s.}}, with probability $\frac{1}{2}$ for \textbf{\emph{d.c.s.}} and \textbf{\emph{i.c.s.}} respectively in each one of steps, for all its elements, which is more than enough to fulfilling the theorem \ref{theorem9} for all this elements.\\ But if all numbers are \textbf{\emph{c.n.}} until certain $n\in\Gamma_{M}, n\leq 2^{M}-1$, then $n+1$ is \textbf{\emph{c.n.}} because for certain step $k$ it will be $T^{k}(n+1)\leq n$.\\ However, in order to proof the Collatz conjecture is required, to consider the following limit $M\longrightarrow \infty$ but this operation makes impossible the random extraction in $\Gamma_{\infty}\equiv \mathbb{N}$ in principle, which leaves invalid to theorem \ref{theorem10}. We resolve this problem of the following way:\\ Let us divide the set $\mathbb{N}$ in classes with the same amount of elements, each one as,
\[\left[1,2^{M}\right],\left[2^{M}+1,2^{M+1}\right],\left[2^{M+1}+1,2^{M+1}+2^{M}\right],...,\left[p,q\right],\left[p+2^{M},q+2^{M}\right],....\]
Led us consider $M\geq 400$ such that it allows, in principle, the random extracting of elements from each of these classes. The first of these classes is $\Gamma_{M}$ and the others are isomorphic with this first class; then theorem \ref{theorem10} is true.\\ Now, if until class $Cl_{n}$ all the numbers are \textbf{\emph{c.n.}} then the first number $1_{n+1}$  of the class $Cl_{n+1}$ it will be \textbf{\emph{c.n.}} too, because by virtue of size of $Cl_{n+1}$ theorem \ref{theorem9} will be satisfied and will exist certain step $k$ such that $T^{k}(1_{n+1})\in Cl_{n}$. Like all other elements of  $Cl_{n+1}$ satisfy the theorem \ref{theorem9}, then we can repeat the reasoning and conclude that $Cl_{n+1}$ is entirely of Collatz. By Induction all classes considered are Collatz classes, and thus all elements of $\mathbb {N}$.\end{proof}
\section{Discussion and complementary themes}
\subsection{Necessary probabilistic character of the problem \\ $ 3n + 1 $}The analysis of the probability is essential to the solution of the conjecture $3n + 1$. In fact, you can not reach solution without considering the probability. In order to show it, you can see that if a certain number $n$ of expressions, given in (\ref {eqn10}), are multiplied member by member, the following should be,
\[ \prod^{n}_{i=0} (3 x_{i}+1)  = 2^{\sum^{n}_{i=0}{k_{i}}} \prod^{n+1}_{i=1}x_{i}.\]
By simple transformations we have,
\[ \frac{3 x_{0}+1}{x_{n+1}}\prod^{n}_{i=1} (3 +\frac{1}{x_{i}})=2^{\sum^{n}_{i=0}{k_{i}}}.\]
Now, if the substitution $x_{i}\rightarrow 1$ is performed, will be
\[ \frac{3 x_{0}+1}{x_{n+1}}(4)^{n} \geq 2^{\sum^{n}_{i=0}{k_{i}}}.\]
And solving it with respect to $ x_{n + 1}$, we have,
\[ (3 x_{0}+1)\frac{(4)^{n}}{2^{\sum^{n}_{i=0}{k_{i}}}} \geq x_{n+1}.\]
Next, by putting $\sum^{n}_{i=0}{k_{i}}=(n+1)\bar{k_{n}}$ is the following,
\[ \frac{3 x_{0}+1}{4}\left(\frac{4}{2^{\bar{k_{n}}}}\right)^{n+1} \geq x_{n+1}.\]
From the above it is clear, as the fulfillment of conjecture depends on a statistical variable, i.e. the behavior of the mean value $ \bar{k} $ for large values of $n$. As we saw above $\bar{k_{n}}=1+\frac{D_{n}}{n}$, where $D_{n}$ are decreases. Hence, if for large values of $n$, can be considered $D_{n}\geq n$ conjecture will be true. But this consideration cannot be performed clearly categorically; it is possible only with a probability approach. Such is the path we have followed here considering, based on the above, that is the most promising and perhaps the only one way for the final demonstration. Many studies have addressed the issue from this point of view as in the case of  D. Gluck and B. Taylor\cite{4}$(2002)$, and Y. Sinai\cite{5}$(2003)$ and everything is worth noting the work of Alain Slakmon and Luc Macot\cite{6}$(2006)$. These last authors consider even equal probabilities to $1/2 $ which was initially considered by them and has been shown here for us.
\subsection{About the general problem $an+b$}
Theorem \ref{theorem9} is not valid for the general case $an+b$ where $a\geq 5$, $b\geq 1$ are odd numbers and $a\geq b+2$. In this case the expression is (This may prove extremely analog way) as follow,
\[T^{n} (x_{0} )=\frac{a^{n}x_{0}}{2^{\sum_{i=0}^{n} k_{i} }}+\frac{b}{2^{\sum_{i=0}^{n}k_{i} }}\sum_{r=1}^{n}(a^{n-r}2^{\sum_{i=0}^{r-1}k_{i}}),\quad k_{0}=0.\]
All analog demonstrative process then would arrive at the expression (\ref{eqn30}) in general form
\begin{equation}
T^{n+m} (x_{0}) =x_{0} \left(\frac{a}{4}\right)^{n+m}+b\left(\frac{a}{4}\right)^{n+m} \sum_{r=1}^{n-1}\left(\frac{2}{a}\right)^r +b\left(\frac{a}{4}\right)^{n+m} \sum_{r=n}^{n+m} \left(\frac{4}{a}\right)^r.
\label{eqn100}
\end{equation}
But as $a\geq 4$, the convergence is impossible to the right. However this result is entirely predictable, since, in general, these procedures lead to cycles that has $ 2 $ or more odd numbers, which is not convergence, whereas the case $ 3n + 1 $ is up to a cycle of one odd element, that is to say \emph{\textbf{o.c.p.}} converge to the unity.\\In the case $5n+1$ we have, for example, the following cycles $[3,1]$, $[13,33,83]$ y $[17,43,27]$. In the other hand the case initiated in $7$ appears to go rapidly to infinity:
\[7
,9
,23
,29
,73
,183
,229
,573
,1433
,3583
,4479
,5599
,6999
,8749
,21873
,54683,\]
\[,34177
,85443
,26701
,66753
,166883
,52151
,65189
,162973
,407433
,1018583,\]
\[,1273229
,3183073
,7957683
,310847
,388559
,485699
,151781
,379453
,948633,\]
\[,2371583,2964479,3705599,4631999,5789999,\ldots\]
In the case $7n+1$ cycle $\left[1,1,...\right]$ occurs, so you have the procedure completed $\left[5,9,1\right]$, but the case of number seven is increasing dramatically:
\[7
,25
,11
,39
,137
,15
,53
,93
,163
,571
,1999
,6997
,12245
,21429
,37501,\]
\[,65627
,229695
,803933
,1406883
,4924091
,17234319
,60320117
,105560205,\]
\[,184730359
,646556257
,565736725
,990039269\]
We can say that the procedures \emph{\textbf{o.c.p.}} of the form $an+b$ ($a\geq b+2$, with $a$ y $b$ odd) do not generate convergent sequences: It is sufficient to show that in all these cases we have, 
\[P(T^{k} (x_{0})>T^{k-1} (x_{0})) \geq P(T^{k} (x_{0})<T^{k-1} (x_{0}))\]
, where $P$ is the probability and, of course, $T(x)$ would be defined as,
\[T^{0} (x_{0} )= x_{0}\in2\mathbb{N}+1,\]
\[T^{i}(x_{0} )= \frac{aT^{i-1}(x _{0})+b}{2^{k_{i} }},\quad k_{i}\in\mathbb{N},\quad T^{i} (x_{0})\in2\mathbb{N}+1.\]
It can be shown then, that the lemma \ref{lemma7} and theorem \ref{theorem10} (Whose accuracy depends on the lemma \ref{lemma7}) are perfectly valid for the general case: The lemma \ref{lemma7} would enunciated as follows,``\emph{Let $ 2^{k} m+i (0 \leq i< 2^k) $ be a equivalent class of additive group $ \mathbb{Z}_{2^k} $ module $ 2^k $  the relationship $T^k (2^{k} m+i)=a^{p_{k}} m+T^{k} (i) $ is satisfied, being $p_{k} $ the amount of \textbf{i.c.s.} that occur in $k$ steps.}'' (Note that it has changed only $ 3 $, by $ a $), and it would prove so very analogously. Meanwhile theorem \ref{theorem10} is would enunciate identically, and it would proved similarly, considering the change in the lemma \ref{lemma7} that not change anything in the reasoning or the result.\\Therefore, sequences generated by these procedures (By means of (\ref{eqn100})) They do not converge in general, so you can make sure that there is nothing analogous to the \emph{Collatz Conjecture}  for more general problems that $3n + 1$.
\section{Conclusions}
We believe that the fundamental objective of our work has been achieved, that is, from our point of view the Collatz Conjecture is demonstrated. We know that many algebraists have no confidence in the results obtained by using probabilistic concepts; for our part this approach is scientific and rigorous long as your application is fully justified. Precisely, one of our results is the demonstration of the necessity of probabilistic approach to this problem, a goal that was also achieved.
Finally, we have made it clear that, in general, similar to the Collatz conjecture statement for all cases of expressions ax + b cannot be established.\\\\
\textbf{\emph{Acknowledgment}}:\\ 
\emph{ Do not want to miss this opportunity to warmly thank the help of Professor MSc. Eligio Barreto Fiu, for their support in the topic of statistic.}

\end{document}